\documentclass{amsart}[12pt]
\usepackage[colorlinks=true,          
            linkcolor=blue,
            citecolor=red,
            urlcolor=blue]{hyperref}
\usepackage{amssymb,amsfonts,amsmath,hyperref,mathrsfs,latexsym,stmaryrd,graphicx}
\DeclareMathAlphabet{\mathpzc}{OT1}{pzc}{m}{it}

\input xy
\xyoption{all}

\newcommand {\Def}{\textrm{Def}}
\newcommand {\MC} {\textrm{MC}}
\newcommand {\Art}{\textrm{Art}_\CC}

\newcommand {\CC}{\mathbb{C}}

\newcommand{\NN}{\mathbb{N}}

\newcommand {\bp}{{\bf p}}

\newcommand {\bG}{{\bf G}}

\newcommand {\bP}{{\bf P}}

\newcommand{\cA}{\mathcal{A}}
\newcommand{\cB}{\mathcal{B}}

\newcommand{\cL}{\mathcal{L}}

\newcommand {\cO}{\mathcal{O}}

\newcommand{\cK}{\mathcal{K}}

\newcommand{\scA}{\mathscr{A}}
\newcommand{\scB}{\mathscr{B}}
\newcommand{\scC}{\mathscr{C}}

\newcommand{\scP}{\mathscr{P}}

\newcommand{\scZ}{\scZ}

\newcommand{\fg}{\mathfrak{g}}

\newcommand{\fl}{\mathfrak{l}}
\newcommand{\fm}{\mathfrak{m}}

\newcommand {\dbar}{\overline{\partial}}

\newcommand {\mhom}{\textrm{Hom}}

\newcommand{\sets}{\textrm{Sets}}
\newcommand {\ad}{\textrm{ad} }

\newcommand  {\eps}{\varepsilon}

\newtheorem{thm}{Theorem}[section]
\newtheorem{Proposition}[thm]{Proposition}
\newtheorem{Lemma}[thm]{Lemma}
\newtheorem{corollary}[thm]{Corollary}

\newtheorem*{thma}{{\bf Theorem A}}
\newtheorem*{thmb}{{\bf Theorem B}}

\begin{document}
\title{ On the L-infinity description of the Hitchin Map}
\author{Peter Dalakov}
\email{pdalakov@sissa.it}
\address{SISSA, via Bonomea 265, 34136 Trieste, Italy. On leave from IMI-BAS, Sofia, Bulgaria.}

\subjclass{17B70, 14D15, 14D20.}
\keywords{Hitchin map, Higgs bundles, L-infinity morphism}

\begin{abstract}
We exhibit, for a $G$-Higgs bundle on a compact complex manifold, a subspace of the second cohomology of the
controlling dg Lie algebra,  containing the  obstructions to smoothness.  For this we  construct an $L_\infty$-morphism,
which induces the Hitchin map and whose  ``toy  version''  controls the adjoint quotient morphism.
This  extends  recent results of E. Martinengo.
\end{abstract}
\maketitle
\section{ Introduction}
The idea that
deformation problems  are controlled by differential graded Lie algebras (dgla's) has been
a key  guiding principle in (characteristic zero) deformation theory.
This  philosophy, currently subsumed in the work of J.Lurie, has been actively exploited by
  Deligne, Drinfeld, Gerstenhaber, Goldman--Millson, 
Kontsevich,
Nijenhuis--Richardson, Schlessinger, Stasheff and     Quillen, since the earliest days of the subject.
 Kontsevich argued in \cite{kontsevich_def_quant} that
the formal geometry
of moduli problems is governed by  a richer structure: an $L_\infty$-algebra (strongly homotopy Lie algebra), 
and natural transformations between deformation functors are induced by $L_\infty$-morphisms
of  the controlling dgla's.
For example, it is shown in \cite{manetti_fiorenza_cartan} that Griffiths' period map is induced by an
$L_\infty$-morphism.
 In a certain sense, an $L_\infty$-morphism encodes the ``Taylor expansion''
of a morphism of pointed formal varieties (\cite{kontsevich_def_quant}, \S 4.1). By general
deformation-theoretic arguments its linear part is a morphism of obstruction theories.

In view of this, given a pair of deformation functors and a natural transformation  between them, 
 one is  confronted with the questions of 
identifying controlling dgla's and a corresponding $L_\infty$-morphism. Apart from being aesthetically pleasing, this  gives 
additional information for the
obstruction spaces of the two functors. The main result in  this note is a
particular example of such a  setup.

Let $X$ be a compact complex manifold, and
 $G$ be a complex reductive Lie group of rank $N$, with Lie algebra $\fg$. By an \emph{$\Omega^1_X$-valued $G$-Higgs bundle  (Higgs pair) on $X$} we shall mean  
 a  pair $(\bP,\theta)$, consisting of a
  holomorphic principal $G$-bundle  $\bP\to X$, and a section $\theta\in H^0(X,\ad \bP\otimes \Omega_X^1)$, satisfying $\theta\wedge\theta=0$. 
The Hitchin map  associates to $(\bP,\theta)$  the spectral invariants of $\theta$. After  some choices, these invariants determine
 a point in $\cB=\bigoplus_{d_i} H^0(X,S^{d_i}\Omega^1_X)$, where $d_i$ are the degrees of the basic $G$-invariant  polynomials
on $\fg$.
 For example, if $G$ is a classical group, then $\theta$ can be  represented  locally on $X$ by a matrix of holomorphic 1-forms, with commuting components.
Then the Hitchin map  assigns to it
 the coefficients of its  characteristic polynomial.
Considering  Higgs pairs on $X\times \textrm{Spec}A$, for  an Artin local ring $A$, allows one to define
the Hitchin map as a natural transformation between suitable deformation functors, see Section \ref{dgla}.

If $X$ is projective (\cite{moduli2}) or compact K\"ahler (\cite{fujiki})
there exist actual (coarse) moduli spaces of (semi-stable) Higgs pairs with fixed topological invariants.
 In the projective case  the Hitchin map
is known to be a proper morphism to $\cB$ (\cite{moduli2}, \S 6), and if $\dim_\CC X=1$ it  determines  an algebraic completely integrable
Hamiltonian system (\cite{hitchin_sd}, \cite{hitchin_sb}).

The main resluts in this note are the following two theorems.

\begin{thma}\label{obstruct}
Let $X$ be a compact complex  manifold, $G$ a complex reductive Lie  group, and $\{p_i, i\in E\}$  homogeneous generators of
$\CC[\fg]^G$, $\deg p_i=d_i$. Let $(\bP,\theta)$ be a $G$-Higgs bundle on $X$,
and $\scC^\bullet= \bigoplus _{p+q=\bullet} A^{0,p}(\ad \bP\otimes \Omega^q_X)$ its controlling dgla. Then the obstruction space
$O_{\Def_{\scC^\bullet}}\subset H^2(\scC^\bullet)$ is contained in the kernel of the map
\[
  H^2(\scC^\bullet)\longrightarrow   \bigoplus_{i\in E} H^1(X,S^{d_i}\Omega^1_X)  
\]
\[
\left[ s^{2,0},s^{1,1},s^{0,2}\right] \longmapsto \bigoplus_{i\in E}  (\partial p_i)(s^{1,1}\otimes \theta^{d_i-1}).  
\]
Here $\partial p_i$ denotes the differential of $p_i$, thought of as an element of $\fg^\vee\otimes S^{d_i-1}(\fg^\vee)$.
\end{thma}
We make some remarks about the geometrical meaning of this theorem in Section \ref{obstructions}. {\bf Theorem A} is an easy consequence of
the more technical 
\begin{thmb}\label{hitchin2}
Let $X$ be a compact complex manifold, $G$ a complex reductive Lie group, and $\{p_i, i\in E\}$  homogeneous generators of
$\CC[\fg]^G$, $\deg p_i=d_i$. Let $(\bP,\theta)$ be a $G$-Higgs bundle on $X$,
and $\scC^\bullet= \bigoplus _{p+q=\bullet} A^{0,p}(\ad \bP\otimes \Omega^q_X)$ its controlling dgla.
 Let
 $\bp_0: \overline{S^\bullet(\scC^\bullet)}\to \overline{S^\bullet\left(A^{0,\bullet}(\ad\bP\otimes \Omega^1_X)\right)}$
be the homomorphism
induced by $\bigoplus_{p,q}s^{p,q}\mapsto s^{1,q}\in  A^{0,q}(\ad\bP\otimes \Omega^1_X)$.
 Then the collection of maps
\[
\xymatrix@1{\bigoplus_{i} h^{d_i}_k=\bigoplus_{i}(\partial^k p_i)(_{-}\otimes \theta^{d_i-k})\circ \bp_0:& S^k(\scC^\bullet[1])\ar[r]\ &\ \bigoplus_{i} A^{0,\bullet}(S^{d_i}\Omega^1_X)      }
\]
\[\bigoplus_{p_1,q_1} s_1^{p_1,q_1}\cdot \bigoplus_{p_2,q_2} s_2^{p_2,q_2}\cdot\ldots \cdot\bigoplus_{p_k,q_k} s_k^{p_k,q_k} 
\longmapsto \bigoplus_{i} \sum_{q_1,\ldots,q_k} (\partial^k p_i)( s_1^{1,q_1}\otimes \ldots\otimes  s_k^{1,q_k}\otimes \theta^{d_i-k}) 
\]
for all $k\geq 1$,   induces an $L_\infty$-morphism
\[
h_\infty: \scC^\bullet = \bigoplus _{p+q=\bullet} A^{0,p}(\ad \bP\otimes \Omega_X^q)\to \scB^\bullet =\bigoplus _{i\in E} A^{0,\bullet}(S^{d_i}\Omega^1_X)[-1].
\]
The   natural transformation of deformation functors,  induced by $h_\infty$ is the Hitchin map: $\Def(h_\infty)=H$
under the identifications $\Def_{\scB^\bullet}\simeq \Def_{H(E,\theta)}$ and $\Def_{\scC^\bullet}\simeq \Def_{(\bP,\theta)}$.
\end{thmb}
The content of this note is organised as follows.
In Section 2 we discuss dgla's and $L_\infty$-algebras, and give some examples.
In Section 3  we  study a Lie-algebraic  ``toy model'' for the Hitchin map.
For that,  we fix   homogeneous generators of $\CC[\fg]^G$, which allows us to identify
the adjoint quotient morphism $\fg\to \fg\sslash G$  with  a polynomial map $\chi:\fg \to \CC^N$.
We associate to  a fixed   $v\in \fg$ a pair of (very simple) dgla's, $C^\bullet$ and $B^\bullet$
(\ref{first_examples}, \ref{toy_model}), whose Maurer--Cartan functors satisfy
$\MC_{C^\bullet}=\fg$, $\MC_{B^\bullet}=\CC^N$. 
Motivated by \cite{kontsevich_def_quant} \S 4.2,
 we construct an $L_\infty$-morphism
$h_\infty: C^\bullet\to B^\bullet$, such that $\MC(h_\infty)=\chi$ (after some identifications).

A suitable modification of  $h_\infty$ gives an 
$L_\infty$-description of the Hitchin map, described  in Section 4.1, where we prove {\bf Theorem B}.
That in turn gives information about obstructions to smoothness for the functor 
$\Def_{\scC^\bullet}$. These are considered in Section 4.2, together with the proof of {\bf Theorem A}. For details 
about obstruction calculus we refer to \cite{fantechi_manetti_1} and \cite{mansea}.

Our results are  the  natural generalisation of   \cite{martinengo}, \S 7,  where  the case of $G=GL(n,\CC)$ is treated by
  ingenuous use of   powers and traces of matrices.

      \subsection{Acknowledgements.}
 I am grateful to  Anya Kazanova for encouragement, and to  E.Cattani, C.Daenzer, S.Guerra, E.Markman, T.Pantev
and J.Stasheff
for support and helpful comments.
 I would like to thank ICTP and the organisers of the \emph{Conference on Hodge Theory and Related Topics} (2010), during which 
an early version of this material was completed.
This  work was partially supported by a CERES-CEI Research Fellowship (FP-7), Grant agreement 229660.

\section{Preliminaries}
  \subsection{Notation and Conventions}
The ground field is $\CC$.
We denote by $\Art$ the category of local Artin  $\CC$-algebras with residue field $\CC$, and denote by $\fm_A$  the maximal ideal
of $A\in \Art$. We denote by $\textrm{Fun}(\Art,\sets)$ the category of functors from $\Art$ to $\sets$, and use
``morphism of functors'' and ``natural transformations'' interchangeably.
We use the standard acronym ``dgla''  for a ``differential graded Lie algebra''.
 If $V^\bullet$ is a graded vector space, we denote by $V[n]$ its shift by $n$, i.e., $V[n]^i=V^{n+i}$.
We denote by  $T(V)$ the tensor algebra and by
  $S(V)=\bigoplus_{k\geq 0} S^k(V)$ the symmetric algebra of a (graded) vector space $V$.
 The same notation is used for the
underlying vector spaces of the corresponding
 coalgebras, but   we use $S_c(V)$ and $T_c(V)$ when we want to emphasise the coalgebra structure.
  The \emph{reduced} symmetric, resp. tensor (co)algebra is denoted
$\overline{S(V)}=\bigoplus_{k\geq 1}S^k(V)$, resp. $\overline{T(V)}$. 
We denote by $\cdot$  the multiplication in $S(V)$.
By $S(k,n-k)$ we denote the $(k,n-k)$ unshuffles:
the  permutations $\sigma\in \Sigma_n$, satisfying  $\sigma(i)<\sigma(i+1)$ for all $i\neq k$.

Next, $G$ is a   complex reductive  Lie group of rank $N$  and $\fg = \textrm{Lie}(G)$. 
We  use fixed homogeneous generators, $\{p_i, i\in E\}$, of the ring of $G$-invariant polynomials on $\fg$.
The degrees of the invariant polynomials
are $d_i=\deg p_i$, so the exponents of $\fg$ are $d_i-1$.
The adjoint quotient map will always be given in terms of this basis, i.e., $\chi:\fg\to \CC^N\simeq \fg\sslash G$.

The base manifold $X$ is assumed to  be compact and complex.
For a holomorphic principal bundle, $\bP$, we denote by $\ad \bP$ its  associated bundle of Lie algebras, 
$\ad \bP=\bP\times_{\ad}\fg$. We denote by $\Omega^p_X$ the sheaf of holomorphic $p$-forms on $X$, and by
 $A^{p,q}$  the global sections of the sheaf $\scA^{p,q}$ of complex differential forms of type $(p,q)$.

We use $\cB$ for the Hitchin base, $\scB^\bullet$ for the abelian dgla governing
the deformations of an element of $\cB$, and  $B^\bullet:=\CC^N[-1]$  for the ``toy model'' of $\scB^\bullet$, see Section \ref{first_examples}.
We use $\scC^\bullet$ for the dgla controlling the deformations of a Higgs pair 
$(\bP,\theta)$ (see Section \ref{dgla}) and $C^\bullet$ for its ``toy version''
$\fg\otimes \CC[\eps]/\eps^2$.

  \subsection{Differential Graded Lie Algebras}\label{dgla}
Since there exist numerous introductory  references for this material
(\cite{goldman-millson}, \cite{gm_kur},  \cite{maurer}, \cite{manetti_complex}, \cite{mansea}),
we present here only  the basic definitions, without attempting to
motivate them in any way.
A \emph{differential graded Lie algebra} (dgla) is a triple $(\scC^\bullet,d,[\ , \ ])$. Here
$\scC^\bullet =\bigoplus_{i\in\NN} \scC^i[-i]$ is a graded vector space, endowed with a bracket
$[\ ,\ ]: \scC^i\times \scC^j\to \scC^{i+j}$. The bracket is graded skew-symmetric and
 satisfies a graded Jacobi identity. Finally, 
 $d:\scC\to \scC[1]$ is a differential ($d^2=0$), which is a degree 1 derivation of the bracket.
To a dgla $\scC^\bullet$ we associate a  Maurer--Cartan functor $\MC_{\scC^\bullet}: \Art\to \sets$, 
\[
\MC_{\scC^\bullet}(A)=\left\{ u\in \scC^1\otimes \fm_A \left|  du+\frac{1}{2}[u,u]=0 \right. \right\} 
\]
and a deformation functor $\Def_{\scC^\bullet}: \Art\to \sets$,
\[
\Def_{\scC^\bullet} (A)= \MC_{\scC^\bullet} (A)/ \exp(\scC^0\otimes \fm_A) . 
\]
The (gauge) action of $\exp(\scC^0\otimes \fm_A)$ on $\scC^1\otimes \fm_A$ is given by
\begin{equation} \label{gauge_action}
 \exp(\lambda): u\mapsto \exp(\ad \lambda)(u)+\frac{I-\exp(\ad\lambda)}{\ad\lambda}(d\lambda).
\end{equation}
Often  $\MC_{\scC^\bullet}(A)$ is considered as the set of objects of a groupoid (\emph{the Deligne groupoid}),  
which is the action groupoid for 
the gauge action on $\MC_{\scC^\bullet}(A)$
(\cite{goldman-millson}, \S 2.2). 
  \subsection{Examples}\label{first_examples}
Deformation problems are described by  deformation functors $\Def: \Art\to \sets$, and we say that a problem is
 governed (controlled) by a dgla $\scC^\bullet$, if there exists an 
isomorphism   $\Def_{\scC^\bullet}\simeq \Def$. A compendium of examples can be found in
 \cite{maurer}, \S 1 or in \cite{stasheff_schlessinger}. The controlling dgla is by no means unique, but
quasi-isomorphic dgla's have  isomorphic deformation functors (\cite{maurer}, Corollary 3.2). We give now a minimalistic (abelian) example, 
which will be used later.

Let $V$ be a finite-dimensional vector space, and $\xi\in V$. We consider the functor
$\Def_{\xi,V}:\Art\to \sets$ of embedded deformations of $\xi\in V$. That is, for any $A\in\Art$,
\[
 \Def_{\xi,V}(A) = \left\{ \sigma\in V\otimes A\left| \sigma = \xi\mod \fm_A\right. \right\}
= \left\{ \xi\right\} + V\otimes \fm_A \subset V\otimes A,
\]
with the obvious map on morphisms.
Then   $V[-1]$, a dgla with trivial bracket and trivial differentials, concentrated in degree $1$, controls the deformation problem.
Indeed, 
 $\MC_{V[-1]}(A)\equiv \Def_{V[-1]}(A)= V\otimes \fm_A$, which we write as $\MC_{V[-1]}= V = \Def_{V[-1]}$.
 The bijection
$\Def_{V[-1]}(A)\simeq \Def_{\xi,V}(A) $,
$s\mapsto \xi+s$, induces an isomorphism of functors $\Def_{V[-1]}\simeq \Def_{\xi,V}$.

Suppose  now $X$ is K\"ahler, and $V= H^0(X,F)$, 
for a holomorphic vector bundle   $F \to X$,  with $h^i(F)=0$ for $i\geq 1$.  
It is then easy to see that $\Def_{\xi,H^0(F)}$   is  isomorphic to the deformation functor of  the 
abelian dgla $\left(A^{0,\bullet}(F)[-1],\dbar_F\right)$, where $\dbar_F$ is the Dolbeault operator of $F$.
The isomorphism is  induced
by the canonical inclusion
 $H^0(X,F)\subset A^{0,0}(X,F)\subset A^{0,\bullet}(F)[-1]^1$.
The existence of such an inclusion relies on Hodge theory (see \cite{GH}, Chapter 0 \S 6, Chapter 1 \S 2),
and this is  where we use the K\"ahler condition.
To prove that the two dglas
$H^0(X,F)[-1]$ and  $\left(A^{0,\bullet}(F)[-1],\dbar_F\right)$ are quasi-isomorphic, one can use the Hodge decomposition and
follow the general setup from \cite{gm_kur}, \S 2 or \cite{kontsevich_def}.

We denote by $\Def_\xi$ the functor of deformations of a section $\xi\in H^0(X,F)$. It is isomorphic to the deformation functor
of $\left(A^{0,\bullet}(F)[-1],\dbar_F\right)$ on an arbitrary $X$ and without the vanishing condition. There is a natural morphism
$\Def_{\xi,H^0(F)}\to \Def_{\xi}$.

We reserve special notation for two instances of this example, namely
$B^\bullet :=\CC^N[-1]$ and $\scB^\bullet:= \left(\oplus _{i} A^{0,\bullet}(S^{d_i}\Omega^1_X)[-1], \dbar_X\right)$.
We also use $\cB= \bigoplus_{i} H^0(X,S^{d_i}\Omega^1_X)$ for the
 Hitchin base.
  \subsection{L-infinity algebras: Motivation}  
The notion of $L_\infty$-algebra (strongly homotopy Lie algebra, Sugawara algebra) generalises the notion of a dgla
by relaxing the Jacoby identity,
and allowing  it be satisfied only ``up to homotopy'' (``BRST-exact term''), determined by ``higher brackets''.
For  a detailed  motivation to this (somewhat technical) subject and  its applications 
to geometry and physics  we  refer  to 
\cite{stasheff_lada}, \cite{kontsevich_def_quant}, \cite{stasheff_schlessinger} and the references therein.
Here we make some non-rigorous remarks along the lines of  \cite{kontsevich_def_quant}, \S 4 and
give the precise definitions (following  \cite{manetti_complex}, Chapter VIII) in the next subsection.

Suppose that we want to study (algebraically) a formal neighbourhood
of $0\in V$, where 
$V$ is a (possibly infinite-dimensional) vector space.
One way to do this is to  consider the reduced
cofree cocommutative coassociative coalgebra, cogenerated by $V$, that is,  
$\overline{C(V)}= \bigoplus_{n\geq 1}\left(V^{\otimes n}\right)^{\Sigma_n}\subset \overline{T_c(V)}$.
Indeed, if $V$ is \emph{finite-dimensional}, then $\overline{C(V)}^\vee $ is the maximal ideal of the  algebra of formal power series.
Next, a morphism $\overline{C(V)}\to \overline{C(W)}$ is determined,
by the universal property of cofree coalgebras, 
 by
a linear map $h: \overline{C(V)}\to W$, with homogeneous components $h^{(n)}: \left(V^{\otimes n}\right)^{\Sigma_n}\to W$,
which are closely related to the Taylor coefficients of $h$. Indeed, the Taylor coefficients of $h$
are symmetric multilinear maps $h_n=\partial^n h: V^{\otimes n}\to W$. They  factor through the quotient $S^n(V)$, and are
carried to
 $h^{(n)}$  under the identification   $S^n(V)\simeq \left(V^{\otimes n}\right)^{\Sigma_n}$.
All of this can be done with graded vector spaces as well.

An $L_\infty$-structure  on a graded vector space $V^\bullet$ is the data of  a degree $+1$ coderivation $Q$ of  the coalgebra $\overline{C_c(V[1])}$,
satisfying $Q^2=0$, i.e., a codifferential. This is thought of as an odd vector field on the formal graded manifold $(V[1],0)$.
Its Taylor coefficients $q_n= \partial^n Q: \overline{S^n_c(V[1])}\to V[1]$ can be considered, by 
the d\'ecalage isomorphism $S^n(V[1])\simeq \Lambda^n(V)[n]$,
 as maps
$\mu_n\in \mhom^{2-n}(\Lambda^n V^\bullet,V^\bullet)$. 
  \subsection{L-infinity  algebras: Definitions}
An \emph{$L_\infty$-algebra structure} $(V^\bullet, q)$ on
 a   graded vector space $V^\bullet$ is   a collection of linear maps   
 $q_k\in \mhom^1(S_c^k (V[1]),V[1]),$ $k\geq 1$, such that the natural extension
of $q=\sum_k q_k$  to a degree $+1$ coderivation $Q$ on $\overline{S_c(V[1])}$
is  a codifferential,  i.e.,  $Q^2=0$. We recall (\cite{manetti_complex}, Corollary VIII.34) that
\[
 Q(v_1\cdot\ldots \cdot v_n)=\sum_{k=1}^n \sum_{\sigma\in S(k,n-k)}\epsilon(\sigma) q_k(v_{\sigma (1)}\cdot \ldots\cdot v_{\sigma (k)})\cdot v_{\sigma(k+1)}\cdot \ldots \cdot v_{\sigma (n)} .
\]
A dgla is  an $L_\infty$-algebra with   $q_1(a)=-da$, $q_2(a\cdot b) =(-1)^{\deg a}[a,b]$, and $q_k=0$ for $k\geq 3$.
To an $L_\infty$-algebra $(V^\bullet,q)$ one associates a 
  Maurer--Cartan functor
$\MC_{V^\bullet}: \Art\to \sets$ 
\[
\MC_V(A)=\left\{  u\in V^1\otimes \fm_A\left| \sum_{k\geq 1} \frac{q_k(u^k)}{k!}=0 \right.  \right\}
\]
and a    deformation functor $\Def_{V^\bullet}$, $\Def_{V}(A)=\MC_V(A)/\sim_\textrm{homotopy}$.
We refer to \cite{manetti_complex},  IX and \cite{maurer}, \S 5
 for  the definition of homotopy equivalence between two
Maurer--Cartan elements.
We do not give it here,  since we shall work only with dgla's considered as $L_\infty$-algebras,
and for these  
gauge equivalence coincides with homotopy equivalence, see \cite{maurer}, Theorem 5.5.

A \emph{morphism} $h_\infty: (V,q)\to (W,\hat{q}) $ between
 two $L_\infty$-algebras  is  a sequence of linear maps
 $h_k\in \mhom^0(S^k_c V[1], W[1])$, $k\geq 1$, for which the induced coalgebra
 morphism
$H: \overline{S_cV[1]}\to \overline{S_c W[1]} $
is a chain map,
 i.e., satisfies $H\circ Q=\widehat{Q}\circ H$. If we denote the components of $Q$ and $H$ by
  $Q_k^n: S^k_c(V)\to S^n_c(V)$ and $H_k^n$, respectively,  then
the morphism condition  reads
\[
 \sum_{a=1}^{\infty} h_a\circ Q_{k}^a = \sum_{a=1}^{\infty}\hat{q}_a\circ H_{k}^a,
\]
for all $k\in \NN$ .

We emphasise   that
the category of dgla's  is a subcategory  of the category of $L_\infty$-algebras, but
it is \emph{not}  full.
For a dgla the only possibly non-zero components of $Q$ are  $Q_k^k$, $k\geq 1$ and $Q_k^{k-1}$, $k>1$. For an 
 \emph{abelian} dgla  $q_2=0=Q_k^{k-1}$.
 We spell  out the condition for an $L_\infty$-morphism $h_\infty:(V,q)\to (W,\hat{q})$
from a dgla to an abelian dgla.
%
%
The $\{h_k\}$ determine an $L_\infty$-morphism if
\begin{equation}\label{mor1}
 h_1\circ q_1=\hat{q}_1\circ h_1,
\end{equation}
which says that $h_1$ is a morphism of complexes, and
\begin{equation}\label{mor2}
 h_k\circ Q_k^k + h_{k-1}\circ Q_k^{k-1} = \hat{q}_1\circ h_k,\  k\geq 2.
\end{equation}
The last condition, when evaluated on homogeneous elements $s_1,\ldots, s_k$ reads

\begin{equation}\label{morphism}
h_k\left(  - \sum_{\sigma\in S(1,k-1)}\eps(\sigma) d(s_{\sigma_1})\cdot s_{\sigma_2}\cdot \ldots  \cdot s_{\sigma_k} \right) +
\end{equation} 
$$h_{k-1}\left(  \sum_{\sigma\in S(2,k-2)}\eps(\sigma)(-1)^{\deg s_{\sigma_1}} [s_{\sigma_1}, s_{\sigma_2}]\cdot s_{\sigma_3}\cdot \ldots  \cdot s_{\sigma_k}\right) = $$
$$-dh_k(s_1\cdot\ldots \cdot s_k).  $$ 
It expresses the failure of $h_{k-1}$ to preserve the bracket in terms of a homotopy given by $h_k$.

Finally, $(V,q)\mapsto \MC_V$ determines a  functor $\MC: L_\infty\to Fun(\Art,\sets)$, whose action on morphisms is given by
 sending an $L_\infty$-morphism $h_\infty\in \mhom_{L_{\infty}}(V,W)$ to a natural transformation $\MC(h_\infty):\MC_V\to\MC_W$,
and,  for each $A\in \Art$, 
\begin{equation}\label{functoriality}
\MC(h_\infty)(A): MC_V(A)\ni x\longmapsto  \sum_{k=1}^{\infty}\frac{1}{k!}h_k(x^k)\in \MC_W(A).
\end{equation}
This descends to a natural transformation $\Def(h_\infty): \Def_V\to \Def_W$. For more details, see, e.g., \cite{manetti_complex}.

  \subsection{Deformation functors for Higgs bundles}
As already stated,  for us a Higgs bundle (Higgs pair) is a pair $(\bP,\theta)$, $\theta\in H^0(X,\ad\bP\otimes \Omega^1_X)$,
$\theta\wedge\theta=0$. We use the term \emph{$L$-valued Higgs bundle} if instead  $\theta\in H^0(X,\ad \bP\otimes L)$,
 for some vector bundle $L\to X$  (as in \cite{don-gaits}, \S 17).

Infinitesimal deformations of Higgs bundles have been  studied extensively.  Biswas and Ramanan (\cite{Biswas-Ramanan}) discussed
 the functor of deformations $\Def_{(\bP,\theta)}$ of a Higgs pair $(\bP,\theta)$   for $\dim X=1$,  and 
identified a  deformation complex, while in
   \cite{biswas_gl} a deformation complex is given for $G=GL(n,\CC)$ and a higher-dimensional (varying) base $X$.
  For arbitrary (fixed) compact K\"ahler $X$ and
arbitrary reductive $G$, the dgla  controlling the  deformations of  $(\bP,\theta)$  
 is   
\begin{equation}\label{higgs_dgla}
 \scC^\bullet =\bigoplus_{p+r=\bullet}A^{0,p}(X,\ad\bP\otimes \Omega_X^r),
\end{equation} 
with differential $\dbar_{\bP} + \ad\theta$, see \cite{simpson_hodge} \S 9, \cite{hbls} \S 2,  \cite{moduli2}, \S 10.  The complex $\scC^\bullet$ is the Dolbeault resolution of the  complex 
from \cite{Biswas-Ramanan}, \cite{biswas_gl}.
For the case of $G=GL(n,\CC)$ and $L$-valued pairs, one replaces $\Omega^q_X$ with $\Lambda^q(L)$, see \cite{martinengo}.
We note that the isomorphism
$\Def_{\scC^\bullet}(A)\simeq \Def_{(P,\theta)}(A)$ is obtained by mapping $[(s^{1,0},s^{0,1})]$ to $(\ker (\dbar+s^{0,1}),
\theta+s^{1,0})$, see \cite{biswas_gl}, \cite{moduli2}, \cite{martinengo}, .

We set $H(P,\theta):=\chi(\theta) \equiv \oplus_i p_i(\theta)\in \cB$. Using the notation from \S \ref{first_examples},
 define the (infinitesimal) Hitchin map as a morphism (natural transformation) of deformation functors 
\[
H: \Def_{(\bP,\theta)} \to \Def_{H(\bP,\theta)}
\]
by $H(A)(P_A,\theta_A)= \chi(\theta_A)$, $A\in\Art$, and the obvious map on morphisms. See also
\cite{Biswas-Ramanan}, Remark 2.8 (iv) or \cite{don-gaits}, \S 17.7.
While the two deformation functors at hand are controlled by dgla's
\begin{equation}\label{defo_functors}
\Def_{\scB^\bullet}\simeq \Def_{H(E,\theta)} \textrm{ and } \Def_{\scC^\bullet}\simeq \Def_{(\bP,\theta)},
\end{equation}
$H$ is not a dgla morphism, unless $G=(\CC^\times)^N$,
since it is not even linear. It is, however, induced by an $L_\infty$-morphism, as we intend to show.

In this note we are concerned with infinitesimal considerations only, but we remark that the
 coarse moduli spaces of semi-stable Higgs bundles (whenever they exist)
carry an amazingly rich geometry. We refer to \cite{hitchin_sd}, \cite{hitchin_sb}, \cite{hbls}, \cite{moduli2}, 
\cite{simpson_hodge} and \cite{don-gaits} for insight and  discussion of global questions.

  \section{The Adjoint Quotient in L-infinity terms}\label{adjoint_quotient}
    \subsection{Toy Model}\label{toy_model}
If one sees the Hitchin map $H$    as a 
``global analogue'' of the adjoint quotient  $\chi:\fg\to \CC^N\simeq \fg\sslash G$, then the Higgs field $\theta$ should  be 
regarded
as a ``global analogue'' of an element $v\in \fg$.
In the present section we describe the morphism $\chi$
in  $L_\infty$ terms, and 
  in Section \ref{hitchin_map} we  modify suitably this  ``toy model'' to obtain an $L_\infty$-description of $H$.

Consider first the dgla  $C^\bullet := \fg\otimes \CC[\eps]/\eps^2 =\fg\oplus \fg[-1]$, with differential
$d_0 = \eps \ad v$. Since $d_1=0$ and $[C^1,C^1]=0$, we have $\MC_{C^\bullet}=\fg$, i.e.,  $\MC_{C^\bullet}(A) = \fg\otimes \fm_A$,
for all $A\in\Art$.
Moreover, the formula (\ref{gauge_action}) for the gauge action  reduces to 
$(\lambda, a)\mapsto e^{\ad \lambda}(v+a)-v$. We also recall from \S \ref{first_examples} the
dgla $B^\bullet=\CC^N[-1]$, with $\MC_{B^\bullet}=\CC^N$.
To see why is it appropriate to consider $C^\bullet$, we introduce
 the functor $\Def_{v,\fg,G}:\Art\to \sets$, 
\[
\Def_{v,\fg,G}(A)= \Def_{v,\fg}(A)/\exp(\fg\otimes\fm_A), 
\]
with the obvious transformation under morphisms of the coefficient ring.
That is, $\Def_{v,\fg,G}(A)$ is the quotient of the affine subspace $\left\{ v\right\}+\fg\otimes\fm_A \subset \fg\otimes A$
under the natural affine action of $\exp(\fg\otimes\fm_A)$ 
(\cite{goldman-millson}, \S 4.2), 
 which we briefly recall.
There is a natural Lie bracket on $\fg\otimes A$, obtained by extending the bracket on $\fg$.
The adjoint action of $G$  on $\fg$ extends  to
an action on $\exp(\fg\otimes\fm_A)$, and we denote by $G_A$ the semidirect product $\exp(\fg\otimes\fm_A)\rtimes G$.
More intrinsically, if we consider $G$ as the group of $\CC$-points of a $\CC$-algebraic group
${\bf G}$, then $G_A=\bG(A)$.
The subgroup $\exp(\fg\otimes \fm_A)\subset G_A$ acts, via the adjoint representation, 
on $\fg\otimes A$, and preserves the affine subspace $\{v\}+ \fg\otimes \fm_A$. The  affine
action on $\fg\otimes \fm_A$ is $(\lambda, a)\mapsto e^{\ad \lambda}(v+a)-v$.
%

  
%
 Thus we have a bijection
$\Def_{C^\bullet}(A)\simeq \Def_{v,\fg,G}(A)$, 
$ a\mapsto v+a$
which induces an isomorphism 
$\Def_{C^\bullet}\simeq \Def_{v,\fg,G}$, as all constructions are natural in the coefficient ring.
Notice that $H^0(C^\bullet)$  is the centraliser of $v\in\fg$, so the functor $\Def_{C^\bullet}$
need not be representable. However, we have the following:
\begin{Proposition}\label{hull}
 Let $\cK\subset \fg$ be a linear complement to $\textrm{Im} (\ad_v)\subset \fg$,
and let $\widehat{\cO}_{(\cK,0)}$ be its completed local ring at the origin.
 Then the functor
$\mhom_{alg}(\widehat{\cO}_{K,0},\textrm{  } )$ is a hull for $\Def_{C^\bullet}$.
\end{Proposition}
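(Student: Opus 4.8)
The plan is to produce by hand a smooth morphism from the pro-representable functor $h:=\mathrm{Hom}_{alg}(\widehat{\cO}_{\cK,0},-)$ to $\Def_{C^\bullet}\simeq\Def_{v,\fg}$ that is bijective on tangent spaces; by the definition of a hull (versal morphism inducing an isomorphism on tangent spaces) this is exactly what must be checked, and no appeal to Schlessinger's conditions (H1)--(H3) is needed since the candidate ring is handed to us. First I would unwind both sides. Because $\widehat{\cO}_{\cK,0}$ is (non-canonically) a power series ring on $\cK^\ast$, evaluating a local homomorphism on a basis of $\cK^\ast$ identifies $h(A)=\mathrm{Hom}_{alg}(\widehat{\cO}_{\cK,0},A)$ naturally with $\cK\otimes\fm_A$, with restriction along a surjection $B\to A$ corresponding to the coefficientwise map $\cK\otimes\fm_B\to\cK\otimes\fm_A$. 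On the target I use the identification $\Def_{C^\bullet}(A)\simeq\Def_{v,\fg}(A)=(v+\fg\otimes\fm_A)/\exp(\fg\otimes\fm_A)$ from the preceding paragraph, with gauge action $(\lambda,a)\mapsto e^{\ad\lambda}(v+a)-v$. Since $\cK\otimes\fm_A\subset\fg\otimes\fm_A$, the natural transformation $\eta\colon h\to\Def_{v,\fg}$ is simply $\kappa\mapsto[\,v+\kappa\,]$, and its naturality is immediate.

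Next I would verify the tangent space condition. Evaluating at $A=\CC[\eps]/\eps^2$ and expanding the gauge action to first order gives $e^{\ad(\eps\mu)}(v+\eps b)-v=\eps(b-\ad_v\mu)$, whence $\Def_{v,\fg}(\CC[\eps]/\eps^2)\cong\fg/\mathrm{Im}(\ad_v)$, while $h(\CC[\eps]/\eps^2)\cong\cK$. On tangent spaces $\eta$ is the composite $\cK\hookrightarrow\fg\twoheadrightarrow\fg/\mathrm{Im}(\ad_v)$, which is an isomorphism precisely because $\cK$ is chosen to be a linear complement to $\mathrm{Im}(\ad_v)$; this is the one place the hypothesis on $\cK$ is used for the tangent comparison.

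The substance is the smoothness of $\eta$, which I would reduce to the lifting criterion for a small extension $0\to I\to B\to A\to 0$ with $\fm_B\cdot I=0$ (it suffices to treat these, since every surjection in $\Art$ factors as a composition of such, and smoothness is stable under composition). Given $\kappa_A\in\cK\otimes\fm_A$ and a class $[\,v+b\,]\in\Def_{v,\fg}(B)$ restricting to $[\,v+\kappa_A\,]$ over $A$, I must produce $\kappa_B\in\cK\otimes\fm_B$ lifting $\kappa_A$ with $[\,v+\kappa_B\,]=[\,v+b\,]$. I would first lift $\kappa_A$ to some $\tilde\kappa\in\cK\otimes\fm_B$ and then, applying to $b$ a lift of the gauge parameter witnessing $[\,v+b\,]|_A=[\,v+\kappa_A\,]$, arrange that $b\equiv\tilde\kappa\bmod I$. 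Then $c:=b-\tilde\kappa$ lies in $\fg\otimes I$, and using $\fg=\cK\oplus\mathrm{Im}(\ad_v)$ I write $c=c_\cK+\ad_v\mu$ with $c_\cK\in\cK\otimes I$ and $\mu\in\fg\otimes I$. The key computation is that for a gauge parameter in $\fg\otimes I$ the exponential collapses: since $I\subset\fm_B$ and $\fm_B I=0$, every bracket involving a factor of $\fm_B$ or two factors of $I$ vanishes, so $e^{\ad\mu}(v+b)-v=b-\ad_v\mu=\tilde\kappa+c_\cK$. Setting $\kappa_B:=\tilde\kappa+c_\cK\in\cK\otimes\fm_B$ then gives a slice representative with $[\,v+\kappa_B\,]=[\,v+b\,]$ and $\kappa_B|_A=\kappa_A$, completing the lifting.

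The main obstacle is organising this slice/gauge-fixing step cleanly rather than any analytic difficulty: one has to choose the successive lifts (of $\kappa_A$ and of the gauge parameter) compatibly and justify truncating the exponential series over $B$. Both are governed entirely by the relation $\fm_B I=0$, which annihilates all higher-order contributions, so once the bookkeeping is in place the estimate is purely algebraic; the real content is recognising that $\cK$ functions as a formal slice transverse to the gauge orbits of $v$, which is what makes $\widehat{\cO}_{\cK,0}$ the hull.
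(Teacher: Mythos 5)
Your proof is correct, but it takes a genuinely different route from the paper. The paper disposes of this proposition in a few lines by citing formal Kuranishi theory: for a dgla with a splitting and finite-dimensional $H^0$, $H^1$, the cited results of Goldman--Millson produce a hull $Kur\to\Def_{C^\bullet}$, and since here $d_1=0$ and $[C^1,C^1]=0$, the Kuranishi space degenerates to a linear complement of the coboundaries $Im(\ad_v)$, which is exactly $\cK$. You instead verify the definition of a hull by hand: you identify $Hom_{alg}(\widehat{\cO}_{\cK,0},-)$ with $\cK\otimes\fm_{(-)}$, map it to $\Def_{v,\fg}$ by $\kappa\mapsto[v+\kappa]$, check the tangent-space isomorphism $\cK\cong\fg/Im(\ad_v)$, and prove smoothness by a gauge-fixing argument over a small extension, where the relation $\fm_B\cdot I=0$ truncates the exponential so that a gauge parameter $\mu\in\fg\otimes I$ acts by $b\mapsto b-\ad_v\mu$, letting you push any lift into the slice $v+\cK\otimes\fm_B$. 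Both your tangent computation and the collapse of $e^{\ad\mu}$ are correct, and reducing to small extensions is legitimate since surjections in $\Art$ factor through them. What the paper's approach buys is brevity and a connection to general machinery (it works whenever the dgla has a splitting and finite-dimensional cohomology, with $\cK$ a genuine, possibly singular, Kuranishi germ); what your approach buys is a self-contained, elementary argument that needs no external theorems and makes transparent the geometric content -- that $\cK$ is a formal slice transverse to the gauge orbits through $v$ -- which the citation hides.
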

\begin{proof}
 By Theorem 1.1. of \cite{gm_kur},  if a dgla $C^\bullet$ is equipped with a splitting
 and has finite-dimensional
 $H^k(C^\bullet)$, $k=0, 1$, 
 then it
 admits a hull $Kur\to \Def_{C^\bullet}$ by formal Kuranishi theory.
 In Theorem 2.3, \cite{gm_kur} it is 
shown that  under certain  topological conditions 
$Kur= Hom_{alg}(\widehat{\cO}_{(\cK,0)},\ )$, where  $(\cK,0)$ is the germ of a complex-analytic
space (Kuranishi space) and $\widehat{\cO}$ is  its completed local ring. 
In our case, $C^1=\fg$, $d_1=0$ and $[C^1,C^1]=0$, so
by Theorems 2.6 and  1.1, \cite{gm_kur} $\cK$ exists and can be taken to be any linear complement  to the coboundaries,
i.e., any  linear complement $\textrm{Im}\ad_v\subset \fg$.
\end{proof}

Our next  step is  to construct an $L_\infty$-morphism 
$h_\infty: C^\bullet\to B^\bullet=\CC^N[-1]$, such that
$\MC(h_\infty): \MC_{C^\bullet}=\fg\to \MC_{B^\bullet}=\CC^N$ gives  
the adjoint quotient.
This involves two ingredients.
First, as $\chi$ is given by homogeneous polynomials,  Taylor's formula can be expressed conveniently
by polarisation. Second, the derivatives of $G$-invariant polynomials satisfy extra
relations.
We discuss these technical properties in Section \ref{polarisation}, and
construct the promised $L_\infty$-morphism in
 Section \ref{adjoint_L_inf}.
  \subsection{Polarisation and Invariant Polynomials}\label{polarisation}
 Let $V$ be  a finite-dimensional vector space.
We have, for each $d,k\in \NN$,  a linear map
\[
\scP_{d}^{k,d-k}=\partial^k : S^d(V^\vee)\longrightarrow T^k(V^\vee)^{\Sigma_k} \otimes S^{d-k}(V^\vee),\ p\mapsto \partial^k p . 
 \]
That is, 
$$ \scP_d^{k,d-k} (p)(X_1\otimes \ldots\otimes X_k\otimes v_1\cdot \ldots \cdot v_{d-k}) = \cL_{X_1}\ldots\cL_{X_k}(p)(v_1\cdot \ldots \cdot v_{d-k}) , $$
where $\cL_X$ denotes  Lie derivative.
%
%
%
%
Differently put,  $\scP_d^{k,d-k}(p)(X_1\otimes \ldots\otimes X_k\otimes v^{d-k})$ is   the coefficient in front of
$t_1\ldots t_k$ in the Taylor expansion of $p(v+\sum t_iX_i)$.
For example, if $V=\fg\fl(r,\CC)$ and  $p(A)=\textrm{tr} A^d$, then 
\[
\scP_d^{k,d-k}(p)(X_1\otimes \ldots\otimes X_k\otimes A^{d-k})=\frac{d!}{(d-k)!}\textrm{tr}(X_1\ldots  X_k A^{d-k}).\]
In particular, 
 $\scP_d^{d,0}=\partial^d: S^d(V^\vee)\simeq \left(V^{\vee\otimes d}\right)^{\Sigma_d}$ is the usual polarisation map, 
identifying $\Sigma_d$ 
invariants and coinvariants, and
$p(X)= \frac{1}{d!}\partial^d p(X^{\otimes d})$.
More generally,
%
%
\[
 (\partial^k p)(X_1\otimes \ldots\otimes X_k\otimes v^{d-k}) = \frac{1}{(d-k)!}(\partial^d p)(X_1\otimes \ldots\otimes X_k\otimes v^{\otimes d-k}) 
\]
and by Taylor's formula
\begin{equation}\label{taylor}
p(v+X) - p(v) = \sum_{k=1}^\infty \frac{1}{k!} (\partial^k p)(X^{\otimes k}\otimes  v^{d-k}). 
\end{equation}
 We prove two technical lemmas.
\begin{Lemma}\label{lemma}
 Let  $p\in \CC[\fg]^G$ be a homogeneous $G$-invariant polynomial of degree $d$.
Then   $(\partial p)(\ad_X(v)\otimes v^{d-1}) =0$, for all $v, X\in \fg$.
\end{Lemma}
\begin{proof}
The statement that $\frac{d}{dt}p(v+t\ad_X(v))\vert_{t=0} =0$   is just an infinitesimal form of the 
$G$-invariance of $p$.
Alternatively, one  can write the above expression as $\frac{1}{(d-1)!}$ times
$$(\partial^d p)(\ad_X(v) \otimes v^{\otimes d-1} )=\left. \frac{1}{d}\frac{d}{dt}\left(\partial^d p \left(\left(Ad(e^{tX})v\right)^{\otimes d} \right)\right)\right|_{t=0} =0. $$
\end{proof}

\begin{Lemma}\label{factor}
 Let $V=\bigoplus_{i=0}^{k-1}V_i$,  $F\in T^d(V^\vee)^\Sigma$, and  $L\in \prod_i GL(V_i)$. The decomposition of $V$
induces a decomposition of $S^d(V^\vee)$, indexed by ordered partitions of $d$ of length $k$. The projection of $F\circ (L\otimes 1^{\otimes d-1})$ 
onto the subspace corresponding to $(d-k+1,1,\ldots,1)$ maps $v\otimes X_1\otimes\ldots \otimes X_{k-1}$ to
\[
 \frac{d!}{(d-k)!}F(L(v)\otimes X_1\otimes \ldots\otimes X_{k-1}\otimes v^{\otimes d-k}) + 
\]
$$\sum_{\sigma\in S(1,k-2)}\frac{d!}{(d-k+1)!}F(L(X_{\sigma(1)})\otimes X_{\sigma(2)}\otimes \ldots\otimes  X_{\sigma(k)}\otimes v^{\otimes d-k}).$$
\end{Lemma}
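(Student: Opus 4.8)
The plan is to read the statement through polarisation. By the very definition of the maps $\scP_{d,\bullet}$ recalled above, projecting a degree-$d$ form onto the multidegree $(d-k+1,1,\ldots,1)$ block of $S^d(V^\vee)$ and then pairing against $v\otimes X_1\otimes\cdots\otimes X_{k-1}$ amounts to extracting the part of the associated polynomial that is linear in each of the directions $X_1,\dots,X_{k-1}$ and of degree $d-k+1$ in $v$. Concretely I would set $g(w)=F(Lw,w,\ldots,w)$, the polynomial attached to $F\circ(L\otimes 1^{\otimes d-1})$, and compute the coefficient of $t_1\cdots t_{k-1}$ in $g\bigl(v+\sum_{i=1}^{k-1}t_iX_i\bigr)$, i.e. the iterated Lie derivative $\cL_{X_1}\cdots\cL_{X_{k-1}}g$ evaluated at $v$, exactly in the spirit of the Taylor formula \eqref{taylor}. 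This reduces the lemma to a single polynomial expansion.

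First I would expand $g\bigl(v+\sum_i t_iX_i\bigr)$ using multilinearity of the symmetric form $F$. Since $L$ acts only on the first tensor slot while $F$ is symmetric in the remaining $d-1$ slots, the right bookkeeping device is to record, for each of the $d$ arguments, whether it is fed $v$ (resp. $Lv$ in the first slot) or one of the $X_i$ (resp. $LX_i$ in the first slot). To isolate the coefficient of $t_1\cdots t_{k-1}$ one must place the $k-1$ distinct directions $X_1,\dots,X_{k-1}$ into distinct slots and fill the remaining slots with $v$. The decisive dichotomy is whether the distinguished first slot, the one carrying $L$, receives $v$ or one of the $X_i$.

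This dichotomy yields precisely the two terms of the statement. If the $L$-slot keeps $v$, the directions $X_1,\dots,X_{k-1}$ are distributed among the $d-1$ symmetric slots; symmetry of $F$ collapses all such placements onto the single representative $F(Lv,X_1,\dots,X_{k-1},v,\dots,v)$, with multiplicity equal to the number of injections of a $(k-1)$-set into a $(d-1)$-set, namely the falling factorial $(d-1)!/(d-k)!$, which together with the normalisation built into the block projection (an overall factor $d$) gives the coefficient $\tfrac{d!}{(d-k)!}$. If instead the $L$-slot receives some $X_{\sigma_1}$, the remaining $k-2$ directions are distributed among the symmetric slots, the representative is $F(LX_{\sigma_1},X_{\sigma_2},\dots,X_{\sigma_{k-1}},v,\dots,v)$ with $d-k+1$ trailing copies of $v$, and summing over the choice of which index occupies the $L$-slot is exactly the sum over the $(1,k-2)$-unshuffles $S(1,k-2)$; the injection count $(d-1)!/(d-k+1)!$ then produces $\tfrac{d!}{(d-k+1)!}$.

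The main obstacle is purely combinatorial bookkeeping: matching the falling-factorial multiplicities to the stated coefficients and, in particular, fixing once and for all the normalisation of the projection onto the $(d-k+1,1,\dots,1)$ block so that the overall factors come out as $\tfrac{d!}{(d-k)!}$ and $\tfrac{d!}{(d-k+1)!}$, rather than off by the degree $d$; and verifying that the reindexing of the second family according to which $X_i$ lands in the $L$-slot is faithfully encoded by the unshuffles $S(1,k-2)$ (the signs $\eps(\sigma)$ being trivial here, as everything sits in degree zero). Note that no invariance of $F$ is needed for this lemma; only the symmetry of $F$ and the fact that $L$ respects the decomposition $V=\bigoplus_i V_i$ enter, so the argument is a direct, if bookkeeping-heavy, expansion, to be combined afterwards with Lemma \ref{lemma} in the construction of $h_\infty$.
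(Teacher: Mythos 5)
Your proposal is correct and takes essentially the same route as the paper, whose entire proof is the single sentence that one counts the terms in $F(L(v+\sum_i X_i),v+\sum_i X_i,\ldots,v+\sum_i X_i)$ containing exactly one of each $X_i$ --- precisely your extraction of the coefficient of $t_1\cdots t_{k-1}$, with the same dichotomy on whether the $L$-slot receives $v$ or some $X_{\sigma_1}$ and the same injection counts. The factor-of-$d$ normalisation you flag is genuinely ambiguous in the paper (the stated coefficients $\tfrac{d!}{(d-k)!}$, $\tfrac{d!}{(d-k+1)!}$ correspond to symmetrising $F\circ(L\otimes 1^{\otimes d-1})$ over the position of the $L$-slot, i.e.\ applying $\tfrac{1}{(d-1)!}\sum_{\tau\in\Sigma_d}$, rather than evaluating the polynomial $w\mapsto F(Lw,w,\ldots,w)$), but it is immaterial: in the only application, Corollary \ref{funny}, the tensor being projected is zero, so only the ratio $d-k+1$ of the two coefficients matters, and your computation reproduces it.
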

\begin{proof}
The proof amounts to expanding $F(L(v+\sum_{i}X_i),v+\sum_{i}X_i,\ldots,v+\sum_{i}X_i)$ in powers of $X_i$, and counting the number of
terms, containing exactly one of each $X_i$.
\end{proof}

\begin{corollary}\label{funny}
 Let  $p\in\CC[\fg]^G$ be a homogeneous $G$-invariant polynomial of degree $d$.
Let $2\leq k\leq d$, and let  $v,Y,X_1,\ldots,X_{k-1}\in \fg$. Then
\[
(\partial^k p)([Y,v]\otimes X_1  \ldots  X_{k-1}\otimes v^{d-k}) +
\sum_{\sigma\in S(1,k-2)} (\partial^{k-1}p)([Y,X_{\sigma_1}]\otimes \ldots\otimes X_{\sigma_{k-1}}\otimes v^{ d-k+1} )=0.
\]
\end{corollary}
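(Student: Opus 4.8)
The plan is to read the corollary as the $(k-1)$-fold polarisation of the infinitesimal $G$-invariance recorded in Lemma \ref{lemma}. The starting point is the function
\[
\Phi(w) := \scP_{d1}(p)([Y,w];w) = \cL_{[Y,w]}p(w),
\]
which, by Lemma \ref{lemma} applied with $w$ in place of $v$ and $Y$ in place of $X$, vanishes identically on $\fg$. Since $\Phi$ is the zero polynomial, so are all of its derivatives; concretely, I would apply the directional derivatives $\cL_{X_1}\cdots\cL_{X_{k-1}}$ to $\Phi$ and evaluate at $w=v$, obtaining $\cL_{X_1}\cdots\cL_{X_{k-1}}\Phi(v)=0$. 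The entire content of the corollary is then the claim that the left-hand side, once expanded, equals the asserted combination of polarisations.

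The key step is the Leibniz expansion of $\cL_{X_1}\cdots\cL_{X_{k-1}}\Phi$, where one must keep in mind that in $\Phi(w)=\scP_{d1}(p)([Y,w];w)$ the \emph{direction} $[Y,w]$ itself depends on the base point $w$. Each derivative $\cL_{X_i}$ may act either on the ``$p$-slot'' (the base point $w$), raising the order of polarisation by one, or on the direction $[Y,w]$, which, being linear in $w$, satisfies $\cL_{X_i}[Y,w]=[Y,X_i]$ and has vanishing higher derivatives. Consequently only two kinds of terms survive: the term in which all $k-1$ derivatives act on the $p$-slot, which at $w=v$ produces $\scP_{dk}(p)([Y,v],X_1,\ldots,X_{k-1};v)$; and the terms in which exactly one derivative, say $\cL_{X_i}$, differentiates the direction to give $[Y,X_i]$ while the remaining $k-2$ act on the $p$-slot, producing $\scP_{d,k-1}(p)([Y,X_i],\{X_j\}_{j\neq i};v)$. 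Terms in which two or more derivatives hit the direction vanish by linearity of $[Y,\cdot]$.

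Summing over the choice of the distinguished index $i$ yields precisely the $(1,k-2)$-unshuffle sum in the statement: because $V=\fg$ sits in degree zero all Koszul signs are trivial, and the symmetry of $\scP_{d,k-1}(p)$ in its first $k-1$ arguments makes the ordering of the remaining $X_j$ immaterial, so there are exactly $k-1$ equal-weight terms. Equating the total to $0$ gives the corollary. Equivalently, I could substitute $w=v+\sum_i t_iX_i$ into $\Phi\equiv0$, use bilinearity to write $[Y,w]=[Y,v]+\sum_i t_i[Y,X_i]$, and read off the coefficient of the squarefree monomial $t_1\cdots t_{k-1}$ via the Taylor interpretation \eqref{taylor} of the $\scP_{dk}$; the bookkeeping of that extraction is exactly what Lemma \ref{factor} records. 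Either way, the hard part is not conceptual but combinatorial: correctly tracking which derivative lands on the base point versus the direction, and checking that the single-hit terms assemble into the stated unshuffle sum with the right multiplicities.
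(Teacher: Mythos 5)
Your proof is correct and is essentially the paper's own argument: both obtain the identity as the $(k-1)$-fold polarisation of the vanishing statement in Lemma \ref{lemma}, the paper packaging the Leibniz/term-counting bookkeeping as Lemma \ref{factor} (applied to $F=\scP_{dd}(p)$ and $L=\ad Y$) while you carry it out directly with iterated directional derivatives of $\Phi(w)=\scP_{d1}(p)([Y,w];w)$. As you yourself observe, the coefficient extraction in your alternative formulation is exactly what Lemma \ref{factor} records, so the two proofs coincide in substance.
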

\begin{proof}
We apply Lemma \ref{factor} to $F= (\partial^d p)$ and $L=\ad Y$ and use  Lemma \ref{lemma} to argue that $F\circ (L\otimes 1)$  is zero.
\end{proof}
%
%
%
%
In the next section we will apply the various 
operators $\scP_d^{k,d-k}$ to sections of $\cA^{0,\bullet}(\ad\bP\otimes S^k\Omega^1_X)$ without changing the notation.
 For example, given sections $s_i$ expressed locally as  $s_i=\alpha_i\otimes X_i$ 
 and $v\in H^0(X,\ad P\otimes \Omega^1_X)$, we write
 \[(\partial^k p)(s_1\otimes \ldots\otimes  s_k\otimes v^{d-k} ) =  \alpha_1\wedge\ldots\wedge\alpha_k (\partial^k p)(X_1\otimes \ldots\otimes X_k\otimes v^{d-k}). \]

    \subsection{The L-infinity Morphism}\label{adjoint_L_inf}
The main result of this section is the following
\begin{Proposition}\label{Lie2}
Let 
$\bp_0: \overline{S^\bullet\left( C^\bullet [1]\right)}\to \overline{S^\bullet\left( C^1\right)}$ denote  the   homomorphism
induced by the projection  $\textrm{pr}_2: C^\bullet[1]=\fg[1]\oplus \fg\to C^1=\fg$.
The collection of maps	
\[
\xymatrix@1{\bigoplus_i h^{d_i}_k= (\partial^k p_i)(_{-}\otimes v^{d_i-k})\circ p_{0} :& S^k\left( C^\bullet [1]\right)\ar[r]& \  \CC^N  }
 \]
\[
 (a_1,b_1)\cdot \ldots \cdot (a_k,b_k)\longmapsto  \bigoplus_i (\partial^k p_i)(b_1\otimes \ldots\otimes  b_k\otimes v^{d_i-k})  
\]
  induces an $L_\infty$-morphism
$h_\infty: C^\bullet \to B^\bullet=\CC^N[-1]$. 
Under the identifications $\MC_{B^\bullet}\simeq \Def_{\chi(v),\CC^n}$ and $\MC_{C^\bullet}\simeq \Def_{v,\fg}$,
$\MC(h_\infty):\MC_{C^\bullet}\to \MC_{B^\bullet}$ coincides with $\chi:\fg\to \CC^N$.

\end{Proposition}
\begin{proof}
To show that this collection of maps determines an
 $L_\infty$-morphism, it suffices to verify that  for each fixed $d_i$, the maps $\{h_k^{d_i}\}$
determine an $L_\infty$-morphism  $C^\bullet\to \CC[-1]$. We prove this in Lemma \ref{Lie1}. Assuming that, 
let 
 $s=(0,b) \in \MC_{C^\bullet}(A)$, $b\in \fg\otimes \fm_A$ for $A\in \Art$. Then, by 
(\ref{functoriality}),
 $\MC(h_\infty)(s)= \sum_{d=1}^{\infty}\frac{1}{d!}h_{\infty}(s^d)$,  
 which equals  $\oplus_i \left( p_i(v + b) -p_i(v)\right)= \chi(v+b)-\chi(v)$ by (\ref{taylor}).
The specified identifications amount to affine transformations translating the origin,
which carry $\MC(h_\infty)$ to the map
 $v+b\mapsto \chi(v+b)$, hence the last statement.
\end{proof}
\begin{Lemma}\label{Lie1}
 Let $p\in \CC[\fg]^G$ be a homogeneous polynomial of degree $d$.
The collection of maps	
\[
\xymatrix@1{h^{d}_k= (\partial^k p)(_{-}\otimes v^{d-k})\circ p_{0} :& S^k\left( C^\bullet [1]\right)\ar[r]& \  \CC  }
 \]
\[
 (a_1,b_1)\cdot \ldots \cdot (a_k,b_k)\longmapsto   (\partial^k p)(b_1\otimes \ldots\otimes  b_k\otimes v^{d-k})  
\]
  induces an $L_\infty$-morphism
$$h^{d}_\infty: C^\bullet\longrightarrow \CC[-1].$$
\end{Lemma}
\begin{proof}
We start with  condition  (\ref{mor1}). 
The  differentials of the two dgla's are, respectively, $\ad v$ and $0$, so 
 we have to show that, for any $s=(a,b)\in \fg^{\oplus 2}$,
$h^{d}_1([v,s]) =0$. But this means  $(\partial p)([v,b]\otimes v^{d-1} ) =0$, which is the conclusion of
 Lemma \ref{lemma}. We turn to (\ref{mor2}), whose right hand side is identically zero (since $B^\bullet$ is formal).
The left side is zero  on $S^k(C^1)$, since $[C^1,C^1]=0$. 
It is also zero on $S^r(C^0)\cdot S^{k-r}(C^1)$ for $r\geq 2$, 
since  $h^{d}_k$ factors through $p_{0}$.
So we only have to verify  (\ref{mor2})  on $C^0\cdot S^{k-1}(C^1)$,  in which  case  
$Q_k^{k-1}$ contributes via the bracket and $Q_k^k$ via $\ad v$.
Take    homogeneous elements  $s_j=(0,b_j),j\geq 2$ and $s_1 =(a,0)$.
In the first summand of (\ref{morphism}), 
unshuffles with   $\sigma(1)\neq 1$  give zero, while  $\sigma(1)=1$ means  $\sigma = id$, so we have
$h^{d}_k\left([v,s_1]\cdot s_2\cdot \ldots \cdot s_k \right) = (-1)(\partial^k p_i)([a,v]\otimes b_2\otimes \ldots\otimes b_k\otimes v^{d-k})$.
The second summand of (\ref{morphism}) is $h^{d}_{k-1}\circ Q_k^{k-1}(s_1\cdot\ldots\cdot s_k)$ and  the non-vanishing terms correspond 
to $(2,k-2)$ unshuffles $\sigma$, for which $\sigma(1)=1$. Hence  the summation is in fact over $(1,k-2)$ unshuffles and we have
\[ 
h^{d}_{k-1}\left(\sum_{\sigma\in S(1,k-2)}(-1)\epsilon(\sigma)[s_1,s_{\sigma(1)}]\cdot \ldots \cdot s_{\sigma(k-1)} \right) =\]
\[
(-1) \sum_{\sigma\in S(1,k-2)}(\partial^{k-1}p_i)\left([a,b_{\sigma(1)}]\otimes\ldots\otimes b_{\sigma(k-1)}\otimes v^{d-k+1} \right).
\]
Note  that  $C^1 = C^\bullet[1]^{0}$, so  $\epsilon(\sigma)=1$. 
The two summands add up to zero by  Corollary \ref{funny}.
\end{proof}
\section{The Hitchin Map  }\label{hitchin_map}
We prove now the two main results of  this note by suitably adapting the calculation of the previous section,
thus extending the  results of \cite{martinengo} to arbitrary reductive structure groups.
    \subsection{Proof of Theorem B}
\begin{proof}
To prove that the collection$\{h_k\}$  determines
an $L_\infty$-morphism, it suffices to prove that for each fixed homogeneous polynomial $p_i$ of degree $d_i$, the given collection of
maps induces an $L_\infty$-morphism $h_\infty^{d_i}:\scC^\bullet\to  A^{0,\bullet}(S^{d_i}\Omega^1_X)[-1] $. This is shown in Lemma \ref{hitchin1} below.
Assuming that, suppose $s=(s',s'')\in \MC_{\scC^\bullet}(A)$, $A\in \Art$. By (\ref{functoriality}) $\Def(h_\infty)(s)= \sum_{d=1}^{\infty}\frac{1}{d!}h_{\infty}(s^d)$, which by
 formula (\ref{taylor}) equals $\oplus_i p_i(\theta + s') -p_i(\theta) =H(P_A,\theta_A)-H(P,\theta)$. 
This is exactly what we want to prove, in view of the identification (\ref{defo_functors}), which amounts to ``shifting the origin''.
\end{proof}
\begin{Lemma}\label{hitchin1}
Let
$p\in\CC[\fg]^G$  be a homogeneous polynomial  of  degree $d$.
 Let
 $\bp_0: \overline{S^\bullet(\scC^\bullet)}\to \overline{S^\bullet\left(A^{0,\bullet}(\ad\bP\otimes \Omega^1_X)\right)}$
denote the homomorphism
induced by $\bigoplus_{p+q=\bullet}s^{p,q}\mapsto s^{1,q}$, where $s^{p,q}\in  A^{0,q}(\ad\bP\otimes \Omega^p_X)$.
Then the collection of maps
\[
\xymatrix@1{h^d_k=(\partial^k p)(_{-}\otimes \theta^{d-k})\circ p_0:&S^k(\scC^\bullet[1])\ar[r]& A^{0,\bullet}(S^{d}\Omega^1_X)      }
\]
\[\bigoplus_{p_1,q_1} s_1^{p_1,q_1}\cdot \bigoplus_{p_2,q_2} s_2^{p_2,q_2}\cdot\ldots \cdot\bigoplus_{p_k,q_k} s_k^{p_k,q_k} 
\longmapsto \sum_{q_1,\ldots,q_k} (\partial^k p)( s_1^{1,q_1}\otimes \ldots\otimes  s_k^{1,q_k}\otimes \theta^{d-k}) 
\]
  induces an $L_\infty$-morphism
$$h^{d}_\infty: \scC^\bullet = \bigoplus _{r+s=\bullet} A^{0,r}(\ad \bP\otimes \Omega_X^s)\to   A^{0,\bullet}(S^{d}\Omega^1_X)[-1] .$$
\end{Lemma}
\begin{proof}
We check the conditions  (\ref{mor1}),(\ref{mor2}). 
The differentials are   $\dbar_\bP + \ad\theta$ and $\dbar_\bP$, so (\ref{mor1})
  is equivalent to
 $(\partial p)([\theta,s]\otimes\theta^{d-1}) =0 $, which holds by   Lemma \ref{lemma}.
  Next assume $k\geq 2$. 
Since by definition $h_k^d$ factors through $\bp_0$,
 both sides of (\ref{mor2}) are identically zero, except possibly  for \emph{two} cases.
Case 1: when evaluated on $S^k\left( A^{0,\bullet}(\ad P\otimes \Omega^1)\right)$ and Case 2: when 
evaluated on $A^{0,\bullet}(\ad \bP)\cdot S^{k-1}\left( A^{0,\bullet}(\ad \bP\otimes \Omega^1)\right) $.
Notice that  in  $\scC^\bullet[1]$, 	  the degree of a homogeneous element in $A^{0,n}(\ad P\otimes \Omega^1_X)$ is $n$.
We start with Case 1, evaluating on decomposable homogeneous elements $s_i=\alpha_i\otimes X_i$, $i=1\ldots k$. 
Since $[s_{\sigma 1}, s_{\sigma 2}]$ and $\ad\theta(s_{\sigma 1})$ belong to $A^{0,\bullet}(\ad \bP\otimes \Omega^2_X)$, they do not
contribute to the left side of (\ref{morphism}).
And since $\sum_{\sigma\in S(1,k-1)}\epsilon(\sigma)\dbar(\alpha_{\sigma (1)})\wedge\ldots \wedge\alpha_{\sigma (k)}=$ $\dbar (\alpha_1\wedge\ldots\wedge\alpha_k)$,
the left side of (\ref{morphism}) gives
\[
 - \dbar (\alpha_1\wedge\ldots\wedge\alpha_k)\otimes (\partial^k p)(X_1\otimes \ldots\otimes X_k\otimes \theta^{d-k}) = \hat{q}_1\circ h_k^{d}(s_1\cdot\ldots\cdot  s_k),
\]
which we wanted to show.
Next we proceed to Case 2, and  take decomposable homogeneous elements $s_i=\alpha_i\otimes X_i$, $s_1\in A^{0,\bullet}(\ad \bP)$, 
$s_2,\ldots, s_k\in A^{0,\bullet}(\ad \bP\otimes \Omega^1_X)$. The right hand side of (\ref{morphism}) is
zero on their product, so we just compute the left side.
The terms  with  $\sigma(1)\neq 1$  are identically zero, and  $\sigma(1)=1$ implies  $\sigma = id$, so we obtain
\[
h^{d}_k\left([\theta,s_1]\cdot s_2\cdot \ldots \cdot s_k \right) = (-1)^{\deg s_1}\alpha_1\wedge\ldots\wedge \alpha_k (\partial^k p)([X_1,\theta]\otimes X_2\otimes\ldots\otimes X_k\otimes\theta^{d-k}).
\]
 The non-vanishing contributions
from $h^{d}_{k-1}\circ Q_k^{k-1}$ in (\ref{morphism})
correspond 
to $(2,k-2)$ unshuffles for which $\sigma_1=1$, so the summation is in fact over $(1,k-2)$ unshuffles and we have
$$ 
h^{d}_{k-1}\left(\sum_{\sigma\in S(1,k-2)}(-1)^{\deg s_1}\epsilon(\sigma)[s_1,s_{\sigma(1)}]\cdot \ldots \cdot s_{\sigma(k-1)} \right).
$$
By the shift, 
  the 
Koszul sign  is  traded for reordering the forms and we get
\[
(-1)^{\deg s_1}\alpha_1\wedge\ldots \wedge \alpha_k\sum_{\sigma\in S(1,k-1)}(\partial^{k-1}p)([X_1,X_{\sigma(1)}]\otimes \ldots \otimes X_{\sigma(k-1)}\otimes \theta^{d-k+1}).
\]
Then the sum of the two terms  is zero by Corollary \ref{funny}. 
\end{proof}
      \subsection{Obstructions to smoothness}\label{obstructions}
While   Higgs bundles on  \emph{curves} have been extensively studied, fairly little is known about their moduli if
$\dim X>1$, apart from the general results of \cite{moduli2}, partially due to scarcity of examples. By  formality  (\cite{hbls}, Lemma 2.2),
Simpson's moduli spaces have at most quadratic singularities (\cite{moduli2}, Theorem 10.4). It is known that whenever $H^2(\scC^\bullet)=0$, the functor $\Def_{\scC^\bullet}$
is smooth (the representing complete local algebra is regular), see \cite{Biswas-Ramanan}, Theorem 3.1, 
\cite{biswas_schumacher_geom_higgs_moduli} Proposition 3.7, \cite{biswas_gl} Remark 2.8. We recall now the description of
the obstruction space $O_{\Def_{\scC^\bullet}}\subset H^2(\scC^\bullet)$.

Recall (\cite{fantechi_manetti_1}, \cite{mansea}, \S 4) that an \emph{obstruction theory} for a deformation
functor $F:\Art\to\sets$ is a pair $(V,v)$. Here  $V$ is a vector space (\emph{obstruction space}), and $v$ assigns to  any small extension
$\xymatrix@1{e: 0\ar[r]&M\ar[r]& B\ar[r]& A\ar[r]&0} $, an \emph{obstruction map} $v_e: F(A)\to V\otimes M$, respecting base change,
with $\textrm{Im}\left( F(B)\to F(A)\right)\subset \ker v_e$. The obstruction theory is \emph{complete},  if this containment is
an equality.
 A \emph{universal} obstruction theory is an obstruction theory $(O_F,o)$, admitting a unique morphism to any other
obstruction theory $(V,v)$. The vector space $O_F$ is called \emph{the obstruction space} of $F$.
\begin{proof}[{\bf Proof of Theorem A}]
The proof is  essentially a standard argument in deformation theory, and can be considered as a form of the
so-called ``Kodaira principle''.
By \cite{mansea}, Theorem 4.6 and Corollary 4.8 (see also \cite{fantechi_manetti_1}), 	any deformation functor $F$
admits a universal obstruction theory, and, if $(V,v)$ is any complete obstruction theory, then  $O_F$ is isomorphic to the space, generated by 
$v_e(F(A))$, where $e$ ranges over
all
principal (i.e., with $M=\CC$) small extensions. By Example 4.4, \cite{mansea}, for any dgla $L$,  the functor $F=\MC_L$ admits a complete obstruction theory
$(H^2(L), v)$. Here the map $v_e: \MC_L(A)\to H^2(L)\otimes M$ is defined by $v_e(x)=[h]$, where $h=d\widetilde{x}+\frac{1}{2}[\widetilde{x},\widetilde{x}]$, and 
$\widetilde{x}\in \MC_L(B)$ is a lift of $x\in \MC_L(A)$. Also, by Corollary 4.13, \cite{mansea}, the functors $\MC_L$ and $\Def_L$ have isomorphic 
obstruction theories. In particular, $O_{\Def_L}\subset H^2(L)$ and  if $L$  \emph{abelian}, then $O_{\Def_L}=(0)$.
Now consider  the abelian dgla $\scB^\bullet$ and
 $h_\infty:\scC^\bullet\to \scB^\bullet$. 
By equation (\ref{mor1}),  $h_1$ is a morphism of complexes, and one can show 
(\cite{maurer}) that $H^2(h_1)$ is a morphism of obstruction spaces, hence the result.

There is a more direct argument if $X$ is K\"ahler and
  $G$ is semi-simple (so that $d_i=0$ is not an exponent), or 
if $H^1(X,\cO_X)=0$. Indeed, in that case
  $\scB^\bullet\simeq_{qis} \CC^N[-1]$ (see Section \ref{first_examples}), so $H^2(\scB^\bullet)=(0)= H^2(h_1)(O_{\scC^\bullet})$.
\end{proof}

\bibliographystyle{alpha}
\bibliography{biblio}
\end{document}